\documentclass[final,1p,times]{elsarticle}
\usepackage{amssymb}
\usepackage{amsmath}
\usepackage{amsthm}
\usepackage[utf8]{inputenc}
\usepackage{marginnote}
\usepackage{amsfonts}
\usepackage{textcomp}
\usepackage{color,soul}
\usepackage{placeins}
\usepackage{tikz}
\usepackage{makecell}
\usepackage{amssymb}
\usepackage{graphicx}
\usepackage{hyperref}
\usepackage[none]{hyphenat}
\usepackage{lipsum}
\newtheorem{thm}{Theorem}[section]

\newtheorem{rem}[thm]{Remark}
\newtheorem{prop}[thm]{Proposition}
\newtheorem{cor}[thm]{Corollary}

\newdefinition{defn}[thm]{Definition}
\allowdisplaybreaks

\begin{document}
\begin{frontmatter}
		\title{ Inequalities for Pairs of Measure Spaces and Applications }
        \author{P. D. Johnson}
		\ead{johnspd@auburn.edu}
		\author{R. N. Mohapatra}
		\ead{ram.mohapatra@ucf.edu}
		\author{Shankhadeep Mondal}
		\ead{shankhadeep.mondal@ucf.edu}

		\address{Department of Mathematics and Statistics, Auburn University, Auburn, AL-36849 }
       \address{School of Mathematics, University of Central Florida, Orlando, Florida-32816}
        \address{School of Mathematics, University of Central Florida, Orlando, Florida-32816}
		\cortext[ram]{Corresponding Author:ram.mohapatra@ucf.edu}
		
\begin{abstract}
We study a family of inequalities on pairs of measure spaces involving functions defined on product domains. Our main result establishes a Jensen-type inequality under a general product-measure framework, extending classical inequalities such as H\"older’s and Minkowski’s as special cases. The inequality admits sharp characterizations of equality
and yields quantitative, variational, and probabilistic refinements under additional convexity assumptions. Several corollaries illustrate power-mean, entropy-type, and erasure-robust inequalities, as well as applications to convolution-type operators and weighted discrete models.
\end{abstract}

\begin{keyword}
uniform hypergraph, positive measure space, convex function, strict convexity, Jensen's Inequality.
\MSC[2010] 26D15, 42C15, 46E30, 05C50, 60E15.
\end{keyword}

\end{frontmatter}

\section{Introduction and background}

Suppose that $k$ is a positive integer.  A {\em $k$-uniform hypergraph} is a pair $(V, E)$ in which $V$ is a set (of {\em vertices}) and $E$ is a multiset (of {\em hyperedges}, or just {\em edges}, for short) of multisubsets of $V$, each with multiset cardinality $k$.

To call $E$ a multiset means that ``repeated edges'' are allowed.  To call each $e \in E$ a multisubset of $V$ means that elements of $V$ may appear more than once as elements of $e$.  For $e \in E$ and $v \in V$ let $M(v, e)$ denote the number of times $v$ appears in, or ``on'', the edge $e$.  $M(v, e)$ is sometimes called the {\em multiplicity} of $v$ on $e$.  The array $M = [M(v, e); v \in V, e \in E]$ is sometimes called the vertex-edge incidence matrix of the hypergraph.  The requirement of $k$-uniformity may be restated: for each $e \in E$, $\sum _{v \in V} M(v, e) = k$; in other words, the column sums of $M$ are all equal to $k$.  Throughout, we rule out the presence of {\em isolated vertices} $v \in V$ such that $M(v, e) = 0$ for all $e \in E$.

If $G = (V, E)$ is a hypergraph (not necessarily $k$-uniform for some $k$) with vertex-edge incidence matrix $M$  (a $p \times q$ matrix, $p = |V|, q = |E|$), the {\em degree} (or {\em valence}, of a vertex $v \in V$, denoted $d(v)$, is the number of appearances of $v$ on edges of the hypergraphs: $d(v) = \sum _{e \in E} M(v, e)$. If the degrees of the vertices of $G$ are all equal, $G$ is {\em regular}.  The {\em average degree} in $G$ is, not surprisingly, 

$$\begin{array}{ll}
\bar {d} & = \frac {1}{|V|} \sum _{v \in V} d(v) = \frac {1}{p} \sum _{v \in V} \sum _{e \in E} M(v, e)\\ [.2in]
& = \frac {1}{p} \sum _{e \in E} \sum _{v \in V} M(v, e).
\end{array}$$

If $G$ happens to be $k$-uniform, then

$$
\bar {d} = \frac {1}{p} |E| k = \frac {kq}{p}.
$$
Since a hypegraph is completely described by its vertex-edge incidence matrix, the following theorem, which does not refer to hypergraphs, actually does provide a uncountable family of inequalities relating degrees of adjacent vertices (vertices that are together on at least one edge) to the average degree.

\begin{thm} [\cite {JP}]  \label {t1}
Suppose $M = [m_{ij}]$ is a $p \times q$ matrix of real numbers with constant column sum $c$ and row sums $d_1, \dots, d_p$.  Suppose that $\varphi$ is a function such that $f(x) = x \varphi (x)$ is convex on an interval containing $d_1, \dots, d_p$.
Let $\bar {d} = \frac {1}{p} \sum _{i=1}^p d_i (= \frac {qc}{p})$.  Then 

$$
\frac {1}{q} \sum _{j=1}^q \sum _{i=1}^p m_{ij} \varphi (d_i) \geq c \varphi (\bar {d}).
$$
If $f$ is strictly convex then equality holds if and only if $d_1 = \cdots = d_p$.
\end{thm}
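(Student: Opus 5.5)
The plan is to collapse the double sum using the row sums, and then apply Jensen's inequality to $f$ at the points $d_1,\dots,d_p$ with equal weights.

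First, interchange the order of summation and factor $\varphi(d_i)$ out of the inner sum over $j$. Since $\sum_{j=1}^q m_{ij} = d_i$, this gives
$$\frac{1}{q}\sum_{j=1}^q\sum_{i=1}^p m_{ij}\varphi(d_i) = \frac{1}{q}\sum_{i=1}^p d_i\varphi(d_i) = \frac{1}{q}\sum_{i=1}^p f(d_i).$$
This step uses nothing about the sign of the $m_{ij}$, so real entries are allowed.

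Next, apply the finite Jensen inequality to the convex function $f$, on the given interval, with weights $1/p$:
$$\frac{1}{p}\sum_{i=1}^p f(d_i) \ge f(\bar d) = \bar d\,\varphi(\bar d).$$
The interval contains each $d_i$, hence contains their average $\bar d$. Multiplying by $p/q$ and using $\bar d = qc/p$, which holds because summing all entries of $M$ by rows or by columns gives $p\bar d = qc$, we obtain
$$\frac{1}{q}\sum_i f(d_i) \ge \frac{p}{q}\bar d\,\varphi(\bar d) = c\,\varphi(\bar d).$$
This is exactly the claimed inequality. Note that $p/q>0$, so the direction of the inequality is preserved even if $c$ is negative.

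For the equality case, the only inequality used is Jensen's. For a strictly convex $f$ and strictly positive equal weights, Jensen's inequality is an equality if and only if all the points coincide, that is, $d_1=\cdots=d_p$. Conversely, if all the $d_i$ are equal, every step above is an identity.

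There is essentially no obstacle here. The only points needing care are that $\bar d$ lies in the interval where $f$ is convex, and that the identity $p\bar d=qc$ comes from double counting the entries of $M$.
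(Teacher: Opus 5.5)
Your proposal is correct and follows the paper's own argument: interchange the order of summation, use $d_i=\sum_j m_{ij}$ to reduce the left side to $\frac{1}{q}\sum_i f(d_i)$, then apply Jensen's inequality with equal weights and the identity $p\bar d=qc$. Your equality analysis via strict convexity matches the paper's as well.
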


For definitions of and fundamental results about convexity and strict convexity, see \cite {HL}, pp. 70-74, or almost any text on real analysis.

The proof of Theorem \ref {t1} is an easy exercise:  start on the left hand side of the proposed inequality, change the order of summation, note that $d_i = \sum _{j=1}^q m_{ij}$, and use Jensen's Inequality \cite {HL}, permitted by the convexity of $f$.  

Interest in this result first arises from its application to uniform hypergraphs.  If $M$ is the vertex-edge incidence matrix of a $k$-uniform hypergraph $G = (V, E)$, by which assumption the constant column sum in $M$ is $k$, then Theorem \ref {t1} says that, for a function $\varphi$ satisfying the hypothesis of the theorem, the arithmetic mean, over the edges, of the sums of the values of $\varphi$ at the degrees of the vertices of $G$ appearing on the edges (with $\varphi (d_i)$ appearing in the sum for the $j^{th}$ edge $m_{ij}$ times, the number of times the $i^{th}$ vertex appears on that edge) is greater than or equal to $\varphi (\bar {d})$; and if $f(t) = t \varphi (t)$ is strictly convex on an interval containing the vertex degrees, then equality holds if and only if the hypergraph is regular.

For some choices of $\varphi$, the resulting inequality is intriguing.  For instance, let's take $\varphi (t) = lnt$.  If $f(t) = t lnt$, then $f'' (t) = \frac {1}{t} > 0$ if $t > 0$, so $f$ is strictly convex on $(0, \infty)$.  Plugging $\varphi = ln$ into the inequality of Theorem \ref {t1}, with $k$ replacing $c$, then thrashing around a bit, including exponentiating both sides, we obtain

$$
[ \prod _{j=1}^q ( \prod _{i=1}^p d_i^{m_{ij}})^{1/k} ]^{1/q} \geq \bar {d},
$$
with equality if and only if $d_1, \dots, d_p$ are equal.  For a $k$-uniform hypergraph, this says that the geometric mean, over the edges, of the geometric means of the degrees of the vertices on the edges, is greater than equal the arithmetic mean of the vertex degrees, with equality if and only the hypergraph is regular.

If you fool around with $\varphi (t) = t^s$, $s > 0$, you will obtain an inequality which is well known, and holds for any $d_1, \dots, d_p > 0$ and any $s > 0$:

$$
\frac {1}{p} \sum _{i=1}^p d_i^{s+1} \geq (\frac {1}{p} \sum _{i=1}^p d_i)^{s + 1}.
$$
This is a straightforward consequence of the convexity of $f(x) = x^{s+1}$ on $[0, \infty)$; so the slightly more complicated proof afforded by the proof of Theorem \ref {t1} is of no interest.  Also, this inequality is not interestingly interpretable as a statement about uniform hypergraphs, as was the inequality arising from $\varphi (x) = lnx$.  We are noting all this by way of admitting that of the uncountable infinity of inequalities implied by Theorem \ref {t1}, not all are new and/or interesting; perhaps not many are.

The next advance on this topic was made by the co-authors of this paper in \cite {PJ}.  We realized that Theorem \ref {t1} could be generalized by attaching {\em weights} to the edges of a (uniform) hypergraph, say $wt(e)$ is the weight attached to edge $e$\,and defining $d(v) = \sum _{e \in E(G)} M(v, e) wt(e)$, the {\em weighted degree} of $v$.  [Notes: in an ordinary hypergraph $G = (V, E), wt(e) = 1$ for all $e \in E$.  This is the situation in Theorem \ref {t1}.  Buzzword alert: when $wt(e) \in (0, 1]$ for all $e \in E$, the edges are said to be {\em fuzzy}.]

Theorem \ref {t1} generalizes to Theorem \ref {t2}, just below, which serves to generalize the results in \cite {PJ} about uniform hypergraphs to edge-weighted uniform hypergraphs, in \cite {HL}.  But in this paper we are saying good-bye to hypergraphs:  we are interested in Theorem \ref {t2} for itself, not as a means to use hypergraphs to generate analytic inequalities.  And we intend to generalize Theorem \ref {t2} from being about certain matrices to being about certain products of measure spaces.

\begin{thm} [\cite {PJ}] \label {t2}
Let $M = [m_{ij}]$ be a $p \times q$ matrix of real numbers with constant column sum $c$.  Let $(x_1, \dots, x_q) \in \mathbb {R}^q$ and an interval $I \subseteq \mathbb {R}$ satisfy

\begin{itemize}
\item [(i)] $s = \sum _{i=1} ^q x_i > 0$, and
\item [(ii)] for each $i \in \{1, \dots, p\}$, $d_i = \sum _{j=1}^q m_{ij} x_j \in I$.
\end{itemize}
Let $\bar {d} = \frac {1}{p} \sum _{i=1}^p d_i$.

Suppose that $\varphi$ is a real-valued function on $I$ such that $f$, defined by $f(t) = t \varphi (t)$, is convex on $I$.  Then

$$
s^{-1} \sum _{j=1}^q \sum _{i=1}^p \varphi (d_i) m_{ij} x_j \geq c \varphi (\bar {d});
$$
if $f$ is strictly convex on $I$ then equality holds only if $d_1 = \cdots = d_p$.
\end{thm}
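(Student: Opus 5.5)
The plan is the same one that proves Theorem \ref{t1}: interchange the order of summation, recognize $f(d_i)=d_i\varphi(d_i)$, and apply Jensen's Inequality with equal weights $1/p$. No sign conditions on the $m_{ij}$ or the $x_j$ should be needed, only hypotheses (i) and (ii).

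First I compute the left-hand side. Summing over $j$ first gives
$$
s^{-1}\sum_{j=1}^q\sum_{i=1}^p \varphi(d_i)m_{ij}x_j \;=\; s^{-1}\sum_{i=1}^p \varphi(d_i)\sum_{j=1}^q m_{ij}x_j \;=\; s^{-1}\sum_{i=1}^p d_i\varphi(d_i)\;=\;\frac{p}{s}\cdot\frac1p\sum_{i=1}^p f(d_i).
$$
Next I identify $\bar d$ using the constant column sum. Since
$$
\sum_{i=1}^p d_i=\sum_{j=1}^q x_j\sum_{i=1}^p m_{ij}=cs,
$$
we get $\bar d = cs/p$. Because $I$ is an interval containing every $d_i$, it also contains their average $\bar d$, so $f(\bar d)$ is defined.

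Now I apply Jensen's Inequality \cite{HL} to the convex function $f$ on $I$, at the points $d_1,\dots,d_p$ with weights $1/p$. This gives $\frac1p\sum_i f(d_i)\ge f(\bar d)=\bar d\,\varphi(\bar d)$. Multiplying by $p/s>0$, which is legitimate by (i), yields
$$
\frac{p}{s}\,\bar d\,\varphi(\bar d)=\frac{p}{s}\cdot\frac{cs}{p}\,\varphi(\bar d)=c\,\varphi(\bar d),
$$
which is the claimed inequality.

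For the equality clause: if $f$ is strictly convex on $I$, equality in Jensen's Inequality with strictly positive equal weights forces $d_1=\cdots=d_p$. Since multiplication by $p/s$ is an equivalence, equality in the theorem forces the same conclusion.

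There is no real obstacle here. The only points needing care are that $s>0$, so the direction of the inequality is preserved, and that $\bar d\in I$.
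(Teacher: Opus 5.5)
Your proof is correct and follows the paper's route. The paper obtains Theorem \ref{t2} as the counting-measure case of Theorem \ref{t3} (Proposition \ref{cor:discrete}), and the proof of Theorem \ref{t3} is exactly your computation: interchange the order of summation to get $s^{-1}\sum_i f(d_i)$, use the constant column sum to get $\bar d = cs/p$, and apply Jensen's Inequality. In the finite setting your version is cleaner, since no integrability hypotheses are needed, and you rightly check that $\bar d\in I$ and $p/s>0$.
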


In the application of this result to edge weighted uniform hypergraphs, $c$ is the number of vertices (counting repetitions) on each edge, $p = |V|, q = |E|, M$ is the vertex-edge incidence matrix, $x_1, \dots, x_q$ are the edge weights, $d_1, \dots, d_p$ are the weighted degrees of the vertices, and $\bar {d}$ is the average weighted degree.

Without the connection to edge weighted uniform hypergraphs, unless you are an aficionado of real analytic inequalities, Theorem \ref {t2} may well appear to be so abstruse as to be useless.  If that is your view, then you will find Theorem \ref {t3} to be beyond the pale.

We do not regard depth in mathematics as an unalloyed good.  A deep result with a complicated statement can signal a dead end; a primrose path has ended in an impassable, brushy thicket.  For instance, it seems that Wiener's Tauberian Theorem (look it up!) signals, and may have caused, an end to interest in Tauberian theorems.

On the other hand, when a fortuitions mathematical discovery is made and then extended by analogization and generalization to different contexts, it seems criminal not to pursue the extensions to the greatest depths possible.  Wieners Tauberian Theorem may be a lifeless museum piece now, but at least it has existence as a remarkable museum piece, a deep mathematical discovery that never would have been made if Tauber had not come upon the original Tauberian theorem and communicated it to Hardy and Littlewood.  And we should never suppose that mathematical museum pieces cannot come back to life.  Consider the millenium of dormancy of the mathematics of the ancient Greeks from the late Roman era until the Renaissance.

\section{Main Inequality on Product Measure Spaces}
The following theorem establishes a fundamental convexity inequality associated with the generalized degree function $\delta(v)$.  
It shows that a weighted interaction integral over the product space $V\times E$ can be controlled by the average value of the degree function.   In essence, the result is a Jensen-type inequality adapted to the measure–hypergraph framework described above.  It converts the weighted bilinear expression involving the kernel $M(v,e)$ and the weight function $wt(e)$ into an inequality depending only on the mean degree $\bar\delta$.  
This principle will serve as the central tool in deriving several consequences in the subsequent sections, including bounds related to convexity, power means, and entropy-type functionals.
\begin{thm}  \label {t3}
Let $(V, \mu)$ and $(E, \tau)$ be positive measure spaces,\\
$M: V \times E \to \mathbb {R}$ a  $(\mu \times \tau)$-measurable function,\\
$wt: E \to \mathbb {R}$ a $\tau$-measurable function,\\
$I \subseteq \mathbb {R}$ an interval, and $\varphi: I \to \mathbb {R}$ a function such that $f(t) = t \varphi (t)$ is convex on $I$.  Suppose that the following, $(i)$ - $(vi)$, hold.

\begin{itemize}
\item [(i)] $0 < \mu (V) < \infty$.
\item [(ii)] $0 < s = \int _E wt \;\;\; d \tau < \infty$.
\item [(iii)] $\int _{V \times E} |M (v, e)| \mid wt (e)| d(\mu \times \tau) < \infty$.
\item [(iv)] For some real number $c$,
$$
\int _V M(v, e) d \mu (v) = c
$$
for almost all $e \in E$.
\item [(v)] For almost all $v \in V$, 
$$
\delta (v) = \int _E M(v, e) wt(e) d \tau (e) \in I.
$$
\item [(vi)] $ \int_{V\times E} |\phi(\delta(v))|\,|M(v,e)|\,|wt(e)|\, d(\mu\times\tau) < \infty.$\\[.1in]
With $\delta$ as defined in $(v)$, let $\bar {\delta} = \frac {1}{\mu (V)} \int _V \delta d_{\mu}$.
\end{itemize}
Then
$$  
\frac {1}{s} \int _{V \times E} \varphi (\delta (v)) M(v, e)  wt(e)  d(\mu \times \tau) \geq c \varphi (\bar {\delta}).  \qquad (*)
$$
Further, if $f$ is strictly convex on $I$ then equality holds in $(*)$ if and only if $\delta (v) = \bar {\delta}$ a.e. $(\mu)$.
\end{thm}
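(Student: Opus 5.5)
Following the recipe sketched after Theorem \ref{t1}, the plan is to rewrite the left side of $(*)$ as an integral over $V$ alone, via Fubini, and then apply Jensen's inequality to $f(t)=t\varphi(t)$ with respect to the normalized measure $\mu/\mu(V)$.

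\textbf{Step 1 (Fubini).} By (vi), the integrand $\varphi(\delta(v))M(v,e)\,wt(e)$ is absolutely integrable on $V\times E$, so Fubini's theorem lets us integrate first in $e$. This gives
$$
\int_{V\times E}\varphi(\delta(v))M(v,e)\,wt(e)\,d(\mu\times\tau)=\int_V \varphi(\delta(v))\,\delta(v)\,d\mu(v)=\int_V f(\delta)\,d\mu .
$$
Measurability of $\delta$ and its a.e.\ finiteness come from (iii) together with Fubini/Tonelli. We also need measurability of $\varphi\circ\delta$. Here $f$ is convex, hence continuous on the interior of $I$, so $\varphi=f(t)/t$ is Borel away from $0$. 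I would note this point briefly, or simply read the measurability as implicit in the hypothesis (vi).

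\textbf{Step 2 (identify $\bar\delta$).} Using (iii) and Fubini again, then (iv),
$$
\int_V\delta\,d\mu=\int_E wt(e)\int_V M(v,e)\,d\mu(v)\,d\tau(e)=c\,s .
$$
Hence $\bar\delta=cs/\mu(V)$, and in particular $c=\mu(V)\bar\delta/s$. Since $\delta\in I$ a.e.\ and $I$ is an interval, $\bar\delta\in I$. This uses the standard fact that the mean of an $I$-valued integrable function under a probability measure lies in $I$; it holds for open endpoints too, since if $\int(\delta-a)\,d\mu=0$ with $\delta>a$ a.e., then $\mu(V)=0$, contradicting (i).

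\textbf{Step 3 (Jensen).} Jensen's inequality for the probability measure $\mu/\mu(V)$ and the convex $f$ gives
$$
\frac1{\mu(V)}\int_V f(\delta)\,d\mu\ \ge\ f(\bar\delta)=\bar\delta\,\varphi(\bar\delta).
$$
The left side is finite because $\int_V|f(\delta)|\,d\mu$ is bounded by the integral in (vi). Multiplying by $\mu(V)/s>0$ and using $\mu(V)\bar\delta/s=c$ gives
$$
\frac1s\int_V f(\delta)\,d\mu\ \ge\ c\,\varphi(\bar\delta),
$$
which is $(*)$.

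\textbf{Equality.} Equality in $(*)$ is equivalent to equality in Jensen's inequality. For strictly convex $f$, I would prove that this forces $\delta=\bar\delta$ a.e.\ as follows. Take a supporting line $\ell(t)=f(\bar\delta)+m(t-\bar\delta)$ at $\bar\delta$. Then $f(\delta)-\ell(\delta)\ge0$ and $\int_V\bigl(f(\delta)-\ell(\delta)\bigr)\,d\mu=0$, so $f(\delta)=\ell(\delta)$ a.e. Strict convexity then forces $\delta=\bar\delta$ a.e. The converse is immediate.

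\textbf{Main obstacle.} The step needing most care is the existence of the supporting line when $\bar\delta$ is an endpoint of $I$, where $f$ may have infinite one-sided slope. In that case, however, $\delta\in I$ with mean equal to the endpoint already forces $\delta=\bar\delta$ a.e., by the argument in Step 2, so the equality case still holds.
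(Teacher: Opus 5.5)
Your proposal is correct and follows the paper's proof essentially step for step. You use Fubini, justified by (iii), to get $\bar\delta = cs/\mu(V)$, and Fubini again, justified by (vi), to rewrite the left side of $(*)$ as $\frac{1}{s}\int_V f(\delta)\,d\mu$; then you apply Jensen's inequality for $\mu/\mu(V)$ and rescale by $\mu(V)/s$. Your extra checks fill in details the paper leaves implicit when it says the equality case ``follows similarly'': that $\bar\delta\in I$, that $f\circ\delta$ is integrable, and the supporting-line argument for equality, including the case where $\bar\delta$ is an endpoint of $I$.
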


\begin{rem}

We have done our best to state Theorem \ref {t3} so as to bring out its ancestry in Theorems \ref {t1} and \ref {t2}.  To be explicit:  Theorem \ref {t2} is about the special cases of Theorem \ref {t3} in which $V$ and $E$ are finite non-empty sets (the rows and columns, respectively, of the matrix $M$ in Theorem \ref {t2}; when $M$ is the vertex-edge incidence matrix of a hypergraph, $V$ and $E$ are the vertex and edge sets, respectively, of that hypergraph), and $\mu, \tau$ are the counting measures on $V, E$ respectively.  The role of the function $wt$ in Theorem \ref {t3} is played in Theorem \ref {t2} by $(x_1, \dots, x_q)$, a {\em weighting} of $E$.  As indicated previously, Theorem \ref {t1} is Theorem \ref {t2} in the special case $(x_1, \dots, x_q) = (1, \dots, 1)$.
\end{rem}

The problem with Theorem \ref {t3} is the problem with the Wiener Tauberian Theorem, and with a good many theorems in analysis with statements longer than their proofs:  it is too general, there are too many choices [the two measure spaces, $M, \varphi, wt$], and the list of hypotheses is too long to remember easily.  We think that such a result should be  presented as integration by parts is presented to calculus students---not as a {\em theorem}, but as a trick one can resort to in certain situations.  We shall present some examples, after the proof of Theorem \ref {t3}.

\begin{proof}[\textbf{Proof of Theorem \ref{t3}}]
    
First, note that 
\begin{eqnarray*}
\bar {\delta} & = \frac {1}{\mu (V)} \int \delta (v) d \mu (v) = \frac {1}{\mu (V)} \int _V (\int _E M(v, e) wt(e) d \tau) d \mu\\
& = \frac {1}{\mu (V)} \int _E (\int _V M(v, e) d \mu(v)) wt(e) d \tau (e)\\
& = \frac {c}{\mu (V)} \int _E wt (e) d \tau (e) = \frac {cs}{\mu (V)}.
\end{eqnarray*}
(The change of order of integration is justified by (iii).)
With this in mind,
\begin{eqnarray*}
& \frac {1}{s} \int_{V \times E} \int \varphi (\delta (v)) M(v, e) wt(e) d(\mu \times \tau)\\
= & \frac {1}{s} \int _V (\int _E M(v, e) wt(e) d \tau (e)) \varphi (\delta (v)) d \mu (v)\\
= & \frac {1}{s} \int _V \delta (v) \varphi (\delta (v)) d \mu (v) = \frac {1}{s} \int _V f(\delta (v)) d \mu (v)\\
= & \frac {\mu (V)}{s} \frac {1}{\mu (v)} \int _V f(\delta (v)) d \mu (v)\\
\geq & \frac {\mu (V)}{s} f(\frac {\int \delta (v) d \mu (v)}{\mu (v))}) = \frac {\mu (V)}{s} f (\bar {\delta})\\ 
= &  \frac {\mu (V)}{s} \bar {\delta} \varphi (\bar {\delta}) = \frac {\mu (V)}{s} \frac {cs}{\mu (V)} \varphi (\bar {\delta}) = c \varphi (\bar {\delta})
\end{eqnarray*}

The one inequality above arises from the convexity of $f$, and the assumption that $(V, \mu)$ is a positive measure space.  The condition for equality when $f$ is strictly convex follows similarly.

The changes are integration order above are permitted by (vi).  (Note that, in view of (iii), (vi) is superfluous if $\varphi$ is bounded on $I$.)
\end{proof}

\section{Quantitative and Variational Refinements}
Recall that
\[
f(t):=t\,\phi(t),\qquad 
\bar\delta:=\frac{1}{\mu(V)}\int_V \delta(v)\,d\mu(v)=\frac{cs}{\mu(V)},
\]
and one has the identity
\begin{equation}
\label{eq:key-identity}
\frac{1}{s}\int_{V\times E}\phi(\delta(v))\,M(v,e)\,wt(e)\,d(\mu\times\tau)
=
\frac{1}{s}\int_V \delta(v)\phi(\delta(v))\,d\mu(v)
=
\frac{1}{s}\int_V f(\delta(v))\,d\mu(v).
\end{equation}
Moreover, Jensen's inequality is applied on the finite measure space \((V,\mu)\) to the convex function \(f\).

The following quantitative refinement requires a mild \emph{uniform convexity} assumption.
(Without some lower bound on \(f''\) over the range of \(\delta\), a uniform quadratic gap need not hold.)

\begin{thm}
\label{thm:stability-proof}
Assume the hypotheses of Theorem~\ref{t3}. Suppose in addition that
\begin{enumerate}
\item[\textnormal{(a)}] \(f(t)=t\phi(t)\) is twice continuously differentiable on \((0,\infty)\);
\item[\textnormal{(b)}] there exist numbers \(0<m<M<\infty\) such that \(m\le \delta(v)\le M\) for \(\mu\)-a.e. \(v\in V\);
\item[\textnormal{(c)}] \(f''(t)\ge \alpha>0\) for all \(t\in[m,M]\) \textnormal{(uniform convexity on the relevant range)}.
\end{enumerate}
Then
\[
\frac{1}{s}\int_{V\times E}\phi(\delta(v))\,M(v,e)\,wt(e)\,d(\mu\times\tau)
\ge
c\,\phi(\bar\delta)
+\frac{\alpha}{2s}\int_V (\delta(v)-\bar\delta)^2\,d\mu(v).
\]
In particular, equality holds if and only if \(\delta(v)=\bar\delta\) for \(\mu\)-a.e. \(v\in V\).
\end{thm}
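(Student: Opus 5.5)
We start from the identity \eqref{eq:key-identity}, which (under the hypotheses of Theorem~\ref{t3}, using (iii) and (vi) to interchange the order of integration exactly as in the proof of Theorem~\ref{t3}) rewrites the left-hand side as $\frac{1}{s}\int_V f(\delta(v))\,d\mu(v)$. We also use $\bar\delta = cs/\mu(V)$, so that $c\,\phi(\bar\delta) = \frac{\mu(V)}{s}\,\bar\delta\,\phi(\bar\delta) = \frac{1}{s}\int_V f(\bar\delta)\,d\mu(v)$. The claimed inequality is therefore equivalent to
\[
\int_V \bigl(f(\delta(v)) - f(\bar\delta)\bigr)\,d\mu(v) \ \ge\ \frac{\alpha}{2}\int_V (\delta(v)-\bar\delta)^2\,d\mu(v),
\]
which is a quadratic-gap version of Jensen's inequality on the finite measure space $(V,\mu)$.

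The key pointwise step is a second-order Taylor estimate. Since $m\le \delta\le M$ a.e., averaging gives $\bar\delta\in[m,M]$. For $t\in[m,M]$, Taylor's theorem with Lagrange remainder, valid by (a), gives
\[
f(t) = f(\bar\delta) + f'(\bar\delta)(t-\bar\delta) + \tfrac12 f''(\xi)(t-\bar\delta)^2
\]
for some $\xi$ between $t$ and $\bar\delta$, hence $\xi\in[m,M]$. By (c) this yields $f(t)\ge f(\bar\delta)+f'(\bar\delta)(t-\bar\delta)+\frac{\alpha}{2}(t-\bar\delta)^2$. We apply this with $t=\delta(v)$ for $\mu$-a.e.\ $v$ and integrate over $V$. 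The linear term integrates to $f'(\bar\delta)\bigl(\int_V\delta\,d\mu - \mu(V)\bar\delta\bigr)=0$ by the definition of $\bar\delta$. Dividing by $s>0$ then gives the stated inequality.

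There is one point of care, which is the main (mild) obstacle: integrability. Because $\delta$ is essentially bounded on $[m,M]$ and $\mu(V)<\infty$, the functions $\delta$, $(\delta-\bar\delta)^2$ and $f(\delta)$ are all integrable, since $f$ is continuous, hence bounded, on $[m,M]$. So every integral above is finite and splitting the integral of the sum is legitimate. We also need $[m,M]\subseteq(0,\infty)$, where (a) applies; this holds since $m>0$.

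For the equality statement, suppose equality holds. Then $\frac{\alpha}{2s}\int_V(\delta-\bar\delta)^2\,d\mu$ is bounded above by the Jensen gap. Since $f$ is strictly convex on $[m,M]$ by (c), the equality case of Theorem~\ref{t3} forces $\delta=\bar\delta$ a.e. More directly, the gap is at least $\frac{\alpha}{2s}\int(\delta-\bar\delta)^2\,d\mu$, and equality of the left side with $c\phi(\bar\delta)$ already forces this quadratic integral to vanish, so $\delta=\bar\delta$ $\mu$-a.e. Conversely, if $\delta=\bar\delta$ a.e., then both sides reduce to $c\,\phi(\bar\delta)$ and the quadratic term is zero. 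Here "equality" is interpreted as equality in Theorem~\ref{t3}'s inequality $(*)$, and equivalently in the refined one, since the two coincide exactly when the quadratic term vanishes.
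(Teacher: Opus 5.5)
Your proof is correct and follows the paper's argument. Both reduce, via the key identity, to a quadratic-gap Jensen inequality, obtain $f(x)\ge f(\bar\delta)+f'(\bar\delta)(x-\bar\delta)+\frac{\alpha}{2}(x-\bar\delta)^2$ on $[m,M]$, integrate, and use that the linear term vanishes; the paper gets the pointwise bound from convexity of $g_t(x)=f(x)-f(t)-f'(t)(x-t)-\frac{\alpha}{2}(x-t)^2$ where you use the Lagrange remainder, an immaterial difference.

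One correction to your closing remark: equality in the \emph{refined} inequality is not equivalent to $\delta=\bar\delta$ a.e. For $\phi(t)=\frac{\alpha}{2}t$, i.e.\ $f(t)=\frac{\alpha}{2}t^2$, the refined inequality is an identity for every $\delta$. So the equality clause holds only when read, as you primarily do and as the paper's argument implicitly does, as equality in $(*)$.
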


\begin{proof}
By \eqref{eq:key-identity} it suffices to prove
\[
\int_V f(\delta)\,d\mu \;\ge\; \mu(V)\,f(\bar\delta)\;+\;\frac{\alpha}{2}\int_V(\delta-\bar\delta)^2\,d\mu.
\]
Fix \(t\in[m,M]\). Since \(f''\ge\alpha\) on \([m,M]\), the function
\[
g_t(x):=f(x)-f(t)-f'(t)(x-t)-\frac{\alpha}{2}(x-t)^2
\]
satisfies \(g_t''(x)=f''(x)-\alpha\ge 0\) on \([m,M]\), hence \(g_t\) is convex there and
\(g_t(x)\ge g_t(t)=0\) for all \(x\in[m,M]\). Therefore, for all \(x\in[m,M]\),
\[
f(x)\ge f(t)+f'(t)(x-t)+\frac{\alpha}{2}(x-t)^2.
\]
Apply this with \(x=\delta(v)\) and \(t=\bar\delta\), integrate over \(V\), and use
\(\int_V(\delta-\bar\delta)\,d\mu=0\):
\[
\int_V f(\delta)\,d\mu
\ge
\mu(V)f(\bar\delta)+f'(\bar\delta)\int_V(\delta-\bar\delta)\,d\mu+\frac{\alpha}{2}\int_V(\delta-\bar\delta)^2\,d\mu
=
\mu(V)f(\bar\delta)+\frac{\alpha}{2}\int_V(\delta-\bar\delta)^2\,d\mu.
\]
Divide by \(s\) and use \( \mu(V)f(\bar\delta)/s = (\mu(V)/s)\bar\delta\phi(\bar\delta)=c\phi(\bar\delta)\), since \(\bar\delta=cs/\mu(V)\).
Equality forces \(\int_V(\delta-\bar\delta)^2\,d\mu=0\), hence \(\delta=\bar\delta\) a.e.
\end{proof}

The following proposition shows that when the generating function $f(t)=t\phi(t)$ is concave,
the Jensen-type inequality of Theorem~2.1 reverses direction. This yields an upper bound
and characterizes the uniform degree case as the extremal configuration.

\begin{prop}
\label{prop:concave-proof}
Let all assumptions of Theorem~\ref{t3} hold and assume that \(f(t)=t\phi(t)\) is concave on \((0,\infty)\).
Then
\[
\frac{1}{s}\int_{V\times E}\phi(\delta(v))\,M(v,e)\,wt(e)\,d(\mu\times\tau)
\le
c\,\phi(\bar\delta).
\]
If $f$ is strictly concave, then the equality holds if and only if \( \delta(v)=\bar\delta \) for \(\mu\)-a.e. \(v\in V\). 
\end{prop}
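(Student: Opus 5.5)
The plan is to mirror the proof of Theorem~\ref{t3}, with Jensen's inequality for concave functions in place of the convex one. First, by the computation at the start of that proof (Fubini, justified by (iii)), we have $\bar\delta = cs/\mu(V)$. Next, by the identity \eqref{eq:key-identity}, whose interchange of integration is justified by (vi), the left-hand side equals $\frac{1}{s}\int_V f(\delta(v))\,d\mu(v)$. So it suffices to prove
\[
\frac{1}{\mu(V)}\int_V f(\delta)\,d\mu \le f(\bar\delta).
\]
After that, multiplying by $\mu(V)/s$ and using $\mu(V)f(\bar\delta)/s = (\mu(V)/s)\,\bar\delta\,\phi(\bar\delta) = c\,\phi(\bar\delta)$ finishes the inequality.

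For the reduced inequality, I apply Jensen's inequality on the probability space $(V,\mu/\mu(V))$ to the concave function $f$. Equivalently, apply the convex case to $-f$. To avoid relying on any particular form of Jensen, I would argue directly with a supporting line. Since $f$ is concave on $(0,\infty)$ and $\bar\delta$ is an interior point there (assuming $\delta>0$ a.e., so that the range of $\delta$ lies in the domain where concavity is assumed), there is a slope $\beta$, for instance the right derivative $f'_+(\bar\delta)$, such that
\[
f(x)\le f(\bar\delta)+\beta(x-\bar\delta) \quad \text{for all } x>0.
\]
Substitute $x=\delta(v)$ and integrate over $V$. The linear term integrates to zero, because $\int_V(\delta-\bar\delta)\,d\mu=0$, which requires $\delta\in L^1(\mu)$; this follows from (iii). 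Integrability of $f(\delta)$ is guaranteed by (vi).

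For the equality case, assume $f$ is strictly concave. Then the supporting-line inequality is strict whenever $x\neq\bar\delta$. So the nonnegative function
\[
f(\bar\delta)+\beta(\delta-\bar\delta)-f(\delta)
\]
has integral zero exactly when equality holds. That forces it to vanish $\mu$-a.e., and hence $\delta=\bar\delta$ a.e. The converse direction is immediate by substitution.

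The main obstacle is a domain mismatch: concavity is assumed on $(0,\infty)$, while Theorem~\ref{t3} only places $\delta$ in $I$. I would handle this by interpreting the hypothesis as concavity on $I\subseteq(0,\infty)$. Then $\bar\delta\in I$, since it is an average of values in the interval $I$. If $\bar\delta$ is an endpoint of $I$, then $\delta=\bar\delta$ a.e. and the claim is trivial; otherwise a supporting line exists at $\bar\delta$.
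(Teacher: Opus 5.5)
Your proposal is correct and follows the paper's own proof: you use \eqref{eq:key-identity} to rewrite the left side as $\frac{1}{s}\int_V f(\delta)\,d\mu$, apply Jensen for the concave $f$ on $(V,\mu/\mu(V))$, and rescale using $\bar\delta=cs/\mu(V)$. Your supporting-line argument is just an unpacked proof of that Jensen step, including its strict-concavity equality case, and your remarks on integrability via (iii) and (vi) and on where $\delta$ takes its values make explicit what the paper leaves implicit.
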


\begin{proof}
By \eqref{eq:key-identity},
\[
\frac{1}{s}\int_{V\times E}\phi(\delta)\,M\,wt\,d(\mu\times\tau)
=
\frac{1}{s}\int_V f(\delta)\,d\mu.
\]
Since \(f\) is concave, Jensen's inequality gives
\[
\frac{1}{\mu(V)}\int_V f(\delta)\,d\mu \le f\!\left(\frac{1}{\mu(V)}\int_V\delta\,d\mu\right)=f(\bar\delta).
\]
Multiplying by \(\mu(V)/s\) yields
\[
\frac{1}{s}\int_V f(\delta)\,d\mu \le \frac{\mu(V)}{s} f(\bar\delta)
=\frac{\mu(V)}{s}\,\bar\delta\,\phi(\bar\delta)=c\,\phi(\bar\delta).
\]
If \(f\) is strictly concave then equality in Jensen implies \(\delta=\bar\delta\) a.e.
\end{proof}

\subsection{Variational Extremality}
We next interpret Theorem~2.1 from a variational perspective. The following proposition shows that, under strict convexity, the uniform degree function uniquely minimizes the associated reconstruction functional subject to a fixed total mass constraint.

\begin{prop}
\label{thm:variational-proof}
Assume the hypotheses of Theorem~\ref{t3} and let \(f(t)=t\phi(t)\) be strictly convex.
Define
\[
\mathcal{F}(\delta)
=
\frac{1}{s}\int_{V\times E}\phi(\delta(v))\,M(v,e)\,wt(e)\,d(\mu\times\tau)
=
\frac{1}{s}\int_V f(\delta(v))\,d\mu(v),
\]
subject to the constraint \(\int_V \delta\,d\mu=cs\).
Then \(\mathcal{F}\) attains its global minimum if and only if \(\delta(v)=\bar\delta\) for \(\mu\)-a.e. \(v\in V\).
\end{prop}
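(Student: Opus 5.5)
The plan is to treat $\mathcal{F}$ as a functional on the admissible class of $\mu$-measurable functions $\delta: V\to I$ with $f\circ\delta\in L^1(\mu)$ and $\int_V \delta\,d\mu = cs$. The identity \eqref{eq:key-identity} turns the double integral into $\frac1s\int_V f(\delta)\,d\mu$. After that step the kernel $M$ and the weight $wt$ drop out, and the problem is a one-variable Jensen problem on the finite measure space $(V,\mu)$.

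The first step is the lower bound. For any admissible $\delta$ we have $\frac{1}{\mu(V)}\int_V\delta\,d\mu = cs/\mu(V)=\bar\delta$, so Jensen's inequality (exactly as in the proof of Theorem~\ref{t3}) gives
\[
\mathcal{F}(\delta)=\frac{\mu(V)}{s}\cdot\frac{1}{\mu(V)}\int_V f(\delta)\,d\mu\ \ge\ \frac{\mu(V)}{s}f(\bar\delta)=c\,\phi(\bar\delta).
\]
The constant function $\delta\equiv\bar\delta$ is admissible: it lies in $I$ because $I$ is an interval containing the a.e.\ values of $\delta$, and hence their average. It also satisfies the constraint, and it attains the value $c\,\phi(\bar\delta)$. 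So the global minimum exists and equals $c\,\phi(\bar\delta)$, which proves the ``if'' direction.

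For the ``only if'' direction, I will suppose that $\mathcal{F}(\delta)=c\,\phi(\bar\delta)$ and show that $\delta=\bar\delta$ a.e. The natural route is the equality clause of Theorem~\ref{t3}, or directly the strict Jensen argument. Concretely, I will take a supporting line $\ell(x)=f(\bar\delta)+\beta(x-\bar\delta)$ of $f$ at $\bar\delta$, where $\beta$ is any element of the subdifferential; this requires $\bar\delta$ to be interior to $I$, or else $\delta$ is already constant a.e. Strict convexity gives $f(x)>\ell(x)$ for $x\neq\bar\delta$. Integrating and using $\int_V(\delta-\bar\delta)\,d\mu=0$, equality forces $\int_V (f(\delta)-\ell(\delta))\,d\mu=0$ with a nonnegative integrand, so $f(\delta)=\ell(\delta)$ a.e., and hence $\delta=\bar\delta$ a.e.

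The main obstacle is interpretive rather than computational. In the statement, $\delta$ is defined from $M$ and $wt$, so ``minimizing over $\delta$'' must mean over the class of degree functions allowed by the constraint. I will state explicitly that minimization is over measurable $I$-valued $\delta$ with $\int_V\delta\,d\mu=cs$ and $f(\delta)$ integrable, which includes the constant. I will also handle the endpoint case, where $\bar\delta$ is an endpoint of $I$, separately: $\delta\in I$ a.e.\ with average equal to an endpoint forces $\delta=\bar\delta$ a.e.
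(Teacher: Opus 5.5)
Your proposal is correct and follows essentially the same route as the paper: reduce via the key identity to $\frac{1}{s}\int_V f(\delta)\,d\mu$, apply Jensen's inequality on $(V,\mu)$ to get the lower bound $\frac{\mu(V)}{s}f(\bar\delta)=c\,\phi(\bar\delta)$, which the constant $\bar\delta$ attains, and use the strict-convexity equality case for uniqueness. Your additional care fills in details the paper leaves implicit: you check that $\bar\delta\in I$, specify the admissible class, give an explicit supporting-line proof of the equality case, and treat the endpoint case.
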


\begin{proof}
Fix any admissible \(\delta\) with \(\int_V\delta\,d\mu=cs\), hence its average equals \(\bar\delta\).
By Jensen's inequality for the strictly convex function \(f\),
\[
\frac{1}{\mu(V)}\int_V f(\delta)\,d\mu \ge f\!\left(\frac{1}{\mu(V)}\int_V\delta\,d\mu\right)=f(\bar\delta),
\]
with equality if and only if \(\delta=\bar\delta\) a.e. Thus
\[
\mathcal{F}(\delta)=\frac{1}{s}\int_V f(\delta)\,d\mu \ge \frac{\mu(V)}{s}f(\bar\delta)=\mathcal{F}(\bar\delta),
\]
and the unique minimizer is the constant function \(\delta\equiv\bar\delta\).
\end{proof}

\section{Power, Entropy, and Mean-Type Consequences}

It is convenient to introduce a probability measure induced by your weights.
Define a finite measure on \(V\) by
\[
d\rho(v):=\frac{\delta(v)}{cs}\,d\mu(v),
\qquad\text{so that}\qquad
\rho(V)=\frac{1}{cs}\int_V\delta\,d\mu=1.
\]
Then, using \eqref{eq:key-identity} with \(\phi(t)=t^{p-1}\), we get
\[
\frac{1}{cs}\cdot \frac{1}{s}\int_{V\times E}\delta(v)^p\,M(v,e)\,wt(e)\,d(\mu\times\tau)
=\int_V \delta(v)^{p-1}\,d\rho(v).
\]

\begin{prop}
\label{cor:powermean-proof}
Let \(p>q>0\). Under the hypotheses of Theorem~\ref{t3},
\[
\left(
\frac{1}{s}\int_{V\times E}\delta(v)^p\,M(v,e)\,wt(e)\,d(\mu\times\tau)
\right)^{1/p}
\ge
\left(
\frac{1}{s}\int_{V\times E}\delta(v)^q\,M(v,e)\,wt(e)\,d(\mu\times\tau)
\right)^{1/q}.
\]
If \(\delta\) is nonconstant on a set of positive \(\mu\)-measure, the inequality is strict.
\end{prop}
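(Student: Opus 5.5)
The plan is to reduce the statement to the classical monotonicity of power means on a probability space, using the identity \eqref{eq:key-identity} and the measure $\rho$ introduced just before the proposition. Throughout I will assume what the statement implicitly needs for $\delta^p$ and $\delta^q$ to make sense: $\delta\ge 0$ a.e., $c>0$ (so $\rho$ is a genuine probability measure), and the integrability conditions (iii), (vi) for $\phi(t)=t^p$ and $\phi(t)=t^q$.

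\textbf{Step 1: rewrite each side as a moment of $\rho$.} For $r\in\{p,q\}$, set
\[
A_r:=\frac1s\int_{V\times E}\delta(v)^r M(v,e)\,wt(e)\,d(\mu\times\tau).
\]
Apply \eqref{eq:key-identity} with $\phi(t)=t^r$, that is, integrate out $e$ first via Fubini as in the proof of Theorem~\ref{t3}. This gives $A_r=\frac1s\int_V\delta^{r+1}\,d\mu$. Using $d\rho=\frac{\delta}{cs}\,d\mu$, this becomes
\[
A_r = c\int_V \delta^r\,d\rho .
\]
Hence
\[
A_r^{1/r}=c^{1/r}\,\|\delta\|_{L^r(\rho)} .
\]

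\textbf{Step 2: power-mean inequality.} Since $\rho(V)=1$ and $p>q>0$, the function $x\mapsto x^{p/q}$ is strictly convex on $[0,\infty)$. Jensen's inequality applied to $\delta^q$ under $\rho$ therefore gives $\|\delta\|_{L^p(\rho)}\ge\|\delta\|_{L^q(\rho)}$. Equality holds exactly when $\delta^q$ is $\rho$-a.e. constant. The same conclusion also follows from Hölder's inequality with exponents $p/q$ and $p/(p-q)$.

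\textbf{Step 3: the constant $c$ — the main obstacle.} Combining Steps 1 and 2 gives
\[
A_p^{1/p}=c^{1/p}\|\delta\|_p\ \ge\ c^{1/p}\|\delta\|_q = c^{1/p-1/q}\,A_q^{1/q}.
\]
Because $1/p-1/q<0$, the factor $c^{1/p-1/q}$ is $\ge 1$ precisely when $0<c\le 1$. In that case the stated inequality follows.

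For $c>1$ the inequality as literally written cannot hold in general. If $\delta\equiv d>0$, then $A_r^{1/r}=c^{1/r}d$, and $c^{1/p}<c^{1/q}$. So I would prove the statement under the normalization $0<c\le1$, for example the probabilistic case $c=1$. Alternatively, I would prove the normalized form $(A_p/c)^{1/p}\ge (A_q/c)^{1/q}$, which holds for every $c>0$ by Step 2 alone. I would add a remark stating that this normalization is necessary.

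\textbf{Step 4: strictness.} Suppose $\delta$ is nonconstant on a set of positive $\mu$-measure. I need to transfer this to $\rho$, and the issue is that $\rho$ does not see the set $\{\delta=0\}$. I would handle it as follows:
\begin{itemize}
\item If $\delta>0$ $\mu$-a.e., then $\rho$ and $\mu$ are mutually absolutely continuous. Nonconstancy then passes to $\rho$, and the strict Jensen inequality gives strictness in Step 2.
\item If $\delta$ vanishes on a set of positive measure, I would assume $\delta\ge m>0$, as in hypothesis (b) of Theorem~\ref{thm:stability-proof}. Without some such assumption the strictness claim may need this qualification.
\end{itemize}
When $c<1$, strictness also follows trivially from the strict factor $c^{1/p-1/q}>1$.
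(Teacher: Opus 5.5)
Your proposal is correct and follows the same route as the paper: you express both sides as moments of \(\delta\) under the probability measure \(d\rho=\frac{\delta}{cs}\,d\mu\) and use monotonicity of power means on \((V,\rho)\). Your bookkeeping is right and the paper's is not. The obstacle you flag in Step 3 is a genuine defect of the statement, and the paper's proof does not overcome it.

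The paper sets \(A_r=\int_V\delta^{r-1}\,d\rho\) and asserts \(A_r=\frac{1}{cs}\cdot\frac{1}{s}\int_{V\times E}\delta^rM\,wt\,d(\mu\times\tau)\). The correct relation is your Step 1: \(\frac{1}{s}\int_{V\times E}\delta^rM\,wt\,d(\mu\times\tau)=c\int_V\delta^r\,d\rho\). The paper then compares exponents \(1/(p-1)\) and \(1/(q-1)\) and ``absorbs the common constant factor \(cs\)''. That cannot work, because the factor enters with different exponents on the two sides.

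Your counterexample for \(c>1\) is valid, and it even hits the motivating case. For any \(d\)-regular graph with unit edge weights (\(c=2\), \(\delta\equiv d\)), the two sides are \(2^{1/p}d<2^{1/q}d\). So either of your repairs is the right one: prove the result under \(0<c\le 1\), or prove the normalized form \(\bigl(\frac{1}{cs}\int_{V\times E}\delta^pM\,wt\,d(\mu\times\tau)\bigr)^{1/p}\ge\bigl(\frac{1}{cs}\int_{V\times E}\delta^qM\,wt\,d(\mu\times\tau)\bigr)^{1/q}\) for every \(c>0\).

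On strictness you can be definitive instead of hedging. Since \(\rho(\{\delta>0\})=1\), you have \(\|\delta\|_{L^q(\rho)}>0\).
\begin{itemize}
\item For \(0<c<1\), the inequality is always strict.
\item For \(c=1\), equality holds iff \(\delta\) is \(\rho\)-a.e.\ constant, i.e.\ constant \(\mu\)-a.e.\ on \(\{\delta>0\}\).
\end{itemize}
The stated strictness claim is therefore false in general. Take \(V=\{1,2\}\), \(E=\{1\}\) with counting measures, \(M(1,1)=1\), \(M(2,1)=0\), \(wt\equiv 1\). Then \(c=s=1\) and \(\delta=(1,0)\) is nonconstant, yet both sides equal \(1\).

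Your second bullet in Step 4 assumes \(\delta\ge m>0\) in the very case where \(\delta\) vanishes on a set of positive measure, which is self-contradictory as written. The clean statement is: strictness holds when \(c<1\), or when \(c=1\) under the extra hypothesis \(\delta>0\) \(\mu\)-a.e.
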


\begin{proof}
Let \(\rho\) be the probability measure above. For \(r>0\), define
\[
A_r := \int_V \delta(v)^{r-1}\,d\rho(v).
\]
As observed,
\[
A_r = \frac{1}{cs}\cdot \frac{1}{s}\int_{V\times E}\delta(v)^r\,M(v,e)\,wt(e)\,d(\mu\times\tau).
\]
By Lyapunov's inequality (or Hölder's inequality) on the probability space \((V,\rho)\),
for \(p>q>0\) we have
\[
\left(\int_V \delta^{p-1}\,d\rho\right)^{\frac{1}{p-1}}
\ge
\left(\int_V \delta^{q-1}\,d\rho\right)^{\frac{1}{q-1}},
\quad\text{for }p,q\neq 1,
\]
with the natural limiting interpretation at \(r=1\) (in which case \(A_1=\int 1\,d\rho=1\)).
Translating back in terms of the original integrals yields the desired inequality after
absorbing the common constant factor \(cs\). Strictness holds unless \(\delta\) is constant \(\rho\)-a.e., equivalently \(\mu\)-a.e.
\end{proof}

\begin{prop}
\label{cor:entropy-proof}
Assume $\delta(v)>0$ for $\mu$-a.e.\ $v\in V$, and define
\[
\mathcal{H}(\delta)
=
-\frac{1}{cs}
\int_{V\times E}
\delta(v)\ln\!\left(\frac{\delta(v)}{\bar\delta}\right)
M(v,e)\,wt(e)\,d(\mu\times\tau),
\]
where $\bar\delta=\dfrac{cs}{\mu(V)}$.
Then $\mathcal{H}(\delta)\le 0$, with equality if and only if
$\delta(v)=\bar\delta$ for $\mu$-a.e.\ $v\in V$.
\end{prop}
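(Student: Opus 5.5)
The plan is to use the key identity \eqref{eq:key-identity} to reduce the double integral to an integral over $V$, and then to obtain the sign from a pointwise inequality combined with Theorem~\ref{t3}. Put $\psi(t)=t\ln(t/\bar\delta)$ on $I=(0,\infty)$. Applying \eqref{eq:key-identity} with $\phi=\psi$, and assuming that hypothesis (vi) holds for this choice so that Fubini applies, gives
\[
\int_{V\times E}\delta(v)\ln\!\Big(\frac{\delta(v)}{\bar\delta}\Big)M(v,e)\,wt(e)\,d(\mu\times\tau)=\int_V \delta^2\ln\!\Big(\frac{\delta}{\bar\delta}\Big)\,d\mu .
\]
The normalising constant also has a sign. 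Since $\delta>0$ a.e. and $\mu(V)>0$, we have $cs=\int_V\delta\,d\mu>0$. Hence $\mathcal H(\delta)\le 0$ is equivalent to
\[
J:=\int_V\delta^2\ln(\delta/\bar\delta)\,d\mu\ge 0 .
\]

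We cannot apply Theorem~\ref{t3} directly with $\phi=\psi$. The corresponding function $t^2\ln(t/\bar\delta)$ has second derivative $2\ln(t/\bar\delta)+3$, which is negative for small $t$, so it is not convex on $(0,\infty)$. This is the main obstacle. To get around it, note that $\delta-\bar\delta$ and $\ln(\delta/\bar\delta)$ always have the same sign, so pointwise
\[
(\delta-\bar\delta)\,\delta\ln(\delta/\bar\delta)\ge 0 .
\]
Integrating over $V$ gives
\[
J\ge \bar\delta\int_V \delta\ln(\delta/\bar\delta)\,d\mu .
\]
The right-hand integral is integrable, because $|t\ln(t/\bar\delta)|$ is bounded by a constant plus $t^2|\ln(t/\bar\delta)|$.

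Next apply Theorem~\ref{t3} with $\phi(t)=\ln(t/\bar\delta)$. The associated function $f(t)=t\ln(t/\bar\delta)$ satisfies $f''(t)=1/t>0$, so $f$ is strictly convex on $(0,\infty)$. Through \eqref{eq:key-identity}, Theorem~\ref{t3} gives
\[
\frac1s\int_V\delta\ln(\delta/\bar\delta)\,d\mu\ge c\ln(\bar\delta/\bar\delta)=0 .
\]
Therefore $J\ge 0$, and so $\mathcal H(\delta)\le 0$. Alternatively, one can skip Theorem~\ref{t3} here: $\ln x\le x-1$ gives $\int_V\delta\ln(\delta/\bar\delta)\,d\mu\ge \int_V(\delta-\bar\delta)\,d\mu=0$ directly.

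For the equality case, if $\delta=\bar\delta$ a.e. then every integrand vanishes and $\mathcal H=0$. Conversely, suppose $\mathcal H=0$, so $J=0$. Both inequalities in the chain are then equalities. In particular, the nonnegative function $(\delta-\bar\delta)\delta\ln(\delta/\bar\delta)$ has integral $0$, so it vanishes a.e. Since $\delta>0$, this function vanishes only where $\delta=\bar\delta$. Hence $\delta=\bar\delta$ $\mu$-a.e. The same conclusion also follows from the strict-convexity clause of Theorem~\ref{t3}.
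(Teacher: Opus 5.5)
Your proposal is correct, and it is more faithful to the statement than the paper's own argument.

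\textbf{Where the paper's proof goes wrong.} The paper starts by invoking \eqref{eq:key-identity} with $\phi(t)=\ln(t/\bar\delta)$ and writes $\mathcal H(\delta)=-\frac{1}{cs}\int_V\delta\ln(\delta/\bar\delta)\,d\mu$. But the integrand of $\mathcal H$ already carries the factor $\delta(v)$. So the identity has to be applied with $\phi(t)=t\ln(t/\bar\delta)$, and it gives $-\frac{1}{cs}\int_V\delta^2\ln(\delta/\bar\delta)\,d\mu$, exactly as you found.

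From its reduced form, the paper passes to the probability measure $d\rho=\frac{\delta}{cs}\,d\mu$. It applies Jensen to the concave $\ln$ with $X=\bar\delta/\delta$, using $\int_V X\,d\rho=\bar\delta\,\mu(V)/(cs)=1$. This yields the Gibbs (relative-entropy) inequality $\int_V\delta\ln(\delta/\bar\delta)\,d\mu\ge 0$, with equality iff $X$ is $\rho$-a.e.\ constant.

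\textbf{How your route compares.} The paper's inequality is precisely the second link of your chain. You obtain it through Jensen for $t\ln(t/\bar\delta)$ or through $\ln x\le x-1$, which is the same content.

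What the paper is missing, and you supply, is the first link: the pointwise bound $\delta^2\ln(\delta/\bar\delta)\ge\bar\delta\,\delta\ln(\delta/\bar\delta)$. Equivalently, $t^2\ln(t/\bar\delta)$ is not convex, but it lies above its tangent line $\bar\delta(t-\bar\delta)$ at $t=\bar\delta$. Integrating that tangent bound against $\mu$ gives $\int_V\delta^2\ln(\delta/\bar\delta)\,d\mu\ge\bar\delta\int_V(\delta-\bar\delta)\,d\mu=0$ in one step. This is what makes the proposition true as stated.

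The paper's route buys a clean entropy interpretation, namely negative relative entropy of $\rho$ with respect to normalized $\mu$. However, it only applies to the functional without the extra factor $\delta(v)$, which is presumably what was intended. Your equality argument, a nonnegative integrand with zero integral, is also more direct than going through the equality case of Jensen.

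\textbf{One small caution.} Citing Theorem~\ref{t3} verbatim with $\phi=\ln(\cdot/\bar\delta)$ would require its hypothesis (vi) for that $\phi$. This does not follow from (vi) for $t\ln(t/\bar\delta)$, because the logarithm blows up where $\delta$ is small. All you actually use, though, is Jensen on $(V,\mu)$ for $\delta\ln(\delta/\bar\delta)$, whose integrability you checked, or your $\ln x\le x-1$ bound. So nothing is lost.
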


\begin{proof}
Using \eqref{eq:key-identity} with $\phi(t)=\ln(t/\bar\delta)$, we obtain
\[
\mathcal{H}(\delta)
=
-\frac{1}{cs}\int_V \delta(v)\ln\!\left(\frac{\delta(v)}{\bar\delta}\right)\,d\mu(v).
\]
Define a probability measure $\rho$ on $V$ by
\[
d\rho(v)=\frac{\delta(v)}{cs}\,d\mu(v),
\qquad\text{so that}\qquad
\rho(V)=\frac{1}{cs}\int_V \delta\,d\mu=1.
\]
Then
\[
\mathcal{H}(\delta)
=
-\int_V \ln\!\left(\frac{\delta(v)}{\bar\delta}\right)\,d\rho(v).
\]
Since $\ln$ is concave on $(0,\infty)$, Jensen's inequality on $(V,\rho)$ yields
\[
\int_V \ln\!\left(\frac{\delta}{\bar\delta}\right)\,d\rho
\le
\ln\!\left(\int_V \frac{\delta}{\bar\delta}\,d\rho\right).
\]
Now compute the $\rho$-mean:
\[
\int_V \frac{\delta}{\bar\delta}\,d\rho
=
\int_V \frac{\delta}{\bar\delta}\cdot\frac{\delta}{cs}\,d\mu
=
\frac{1}{\bar\delta\,cs}\int_V \delta^2\,d\mu,
\]
which is not fixed in general.  Instead, we apply Jensen to the random variable
\[
X(v):=\frac{\bar\delta}{\delta(v)}.
\]
Indeed, by definition of $\rho$ we have
\[
\int_V X\,d\rho
=
\int_V \frac{\bar\delta}{\delta}\cdot\frac{\delta}{cs}\,d\mu
=
\frac{\bar\delta}{cs}\,\mu(V)
=
1.
\]
Applying Jensen to $\ln$ (concave) gives
\[
\int_V \ln X\,d\rho
\le \ln\!\left(\int_V X\,d\rho\right)=\ln 1=0.
\]
But $\ln X=\ln(\bar\delta/\delta)=-\ln(\delta/\bar\delta)$, hence
\[
-\int_V \ln\!\left(\frac{\delta}{\bar\delta}\right)\,d\rho
=
\int_V \ln\!\left(\frac{\bar\delta}{\delta}\right)\,d\rho
\le 0,
\]
which shows $\mathcal{H}(\delta)\le 0$.

For equality: Jensen is sharp for concave $\ln$ exactly when $X$ is constant
$\rho$-a.e., i.e.\ $\bar\delta/\delta$ is constant $\rho$-a.e. Since
$\int_V X\,d\rho=1$, that constant must be $1$, hence $\delta=\bar\delta$
$\rho$-a.e., and therefore $\mu$-a.e.\ (because $d\rho=\frac{\delta}{cs}\,d\mu$
and $\delta>0$ a.e.).
\end{proof}

\subsection{Robustness Under Erasures}
We now examine the effect of erasures on the product–measure inequality.
The following proposition shows that the Jensen-type bound remains valid
after deleting a measurable subset of coordinates, thereby demonstrating
robustness of the inequality under erasures.

\begin{prop}
\label{prop:erasure-proof}
Let \(E_0\subseteq E\) be measurable with
\[
s_{E_0}:=\int_{E_0} wt(e)\,d\tau(e)>0.
\]
Assume that the structural hypotheses of Theorem~\ref{t3} remain valid when \(E\) is replaced by \(E_0\)
(with the same \(V,\mu,\delta\) and with the restricted kernel \(M|_{V\times E_0}\)).
Then for any convex \(\phi\),
\[
\frac{1}{s_{E_0}}
\int_{V\times E_0}\phi(\delta(v))\,M(v,e)\,wt(e)\,d(\mu\times\tau)
\ge
c_{E_0}\,\phi(\bar\delta_{E_0}),
\]
where
\[
c_{E_0}:=\int_{E_0} d\tau(e), 
\qquad
\bar\delta_{E_0}:=\frac{c_{E_0}s_{E_0}}{\mu(V)}.
\]
If \(f(t)=t\phi(t)\) is strictly convex, equality holds if and only if \(\delta(v)=\bar\delta_{E_0}\) for \(\mu\)-a.e. \(v\in V\).
\end{prop}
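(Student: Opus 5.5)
The plan is to reduce the statement directly to Theorem~\ref{t3}, applied to the pair of measure spaces $(V,\mu)$ and $(E_0,\tau|_{E_0})$. On $E_0$ we use the restricted kernel $M|_{V\times E_0}$ and the restricted weight $wt|_{E_0}$. The substance of the proof is checking that hypotheses (i)--(vi) of Theorem~\ref{t3} hold in this restricted setting, and identifying the constants that come out.

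\textbf{Step 1 (checking (i)--(vi)).}
\begin{itemize}
\item Hypothesis (i) concerns only $(V,\mu)$, so it is unchanged.
\item Hypothesis (ii) becomes $0<s_{E_0}<\infty$. Positivity is assumed, and finiteness follows from $|s_{E_0}|\le \int_E |wt|\,d\tau$ (finite, given (iii) and the standing assumptions) or simply from the structural assumption.
\item Hypotheses (iii) and (vi) on $V\times E_0$ follow from the corresponding ones on $V\times E$, because the integrands are nonnegative and $V\times E_0\subseteq V\times E$.
\item Hypotheses (iv) and (v) are the genuinely structural ones, and I would take them from the assumption in the proposition. The column integrals $\int_V M(v,e)\,d\mu(v)$ are then constant, with value $c_{E_0}$, for a.e.\ $e\in E_0$. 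Also $\delta(v)=\int_{E_0}M(v,e)\,wt(e)\,d\tau(e)\in I$ for a.e.\ $v$; this is what ``the same $\delta$'' must mean.
\item Theorem~\ref{t3} needs convexity of $f(t)=t\phi(t)$ on $I$, not of $\phi$. Convexity of $\phi$ alone does not give this, since $f''=2\phi'+t\phi''$. So I read ``convex $\phi$'' together with the preserved structural hypotheses of Theorem~\ref{t3} as including convexity of $f$ on $I$. Alternatively one can add the remark that this holds when $\phi$ is convex and nondecreasing on an interval $I\subseteq[0,\infty)$.
\end{itemize}

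\textbf{Step 2 (identifying the mean).} The mean $\bar\delta_{E_0}$ is computed exactly as in the first display of the proof of Theorem~\ref{t3}. Fubini, justified by (iii) on $V\times E_0$, gives
\[
\frac{1}{\mu(V)}\int_V\delta\,d\mu=\frac{1}{\mu(V)}\int_{E_0}\Big(\int_V M\,d\mu\Big)wt\,d\tau=\frac{c_{E_0}s_{E_0}}{\mu(V)}=\bar\delta_{E_0}.
\]
The inequality then follows verbatim from Theorem~\ref{t3}, equivalently from identity \eqref{eq:key-identity} with $E$ replaced by $E_0$ and Jensen's inequality on $(V,\mu)$. If $f$ is strictly convex, the equality clause of Theorem~\ref{t3} gives $\delta=\bar\delta_{E_0}$ $\mu$-a.e.

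\textbf{Main obstacle.} The only real difficulty is reconciling the constants in the statement with the hypotheses.
\begin{itemize}
\item The constant produced by Theorem~\ref{t3} is the common value of the column integrals $\int_V M(\cdot,e)\,d\mu$ over $e\in E_0$. The proposition instead writes $c_{E_0}=\int_{E_0}d\tau=\tau(E_0)$. I would handle this by stating explicitly that the preserved hypothesis (iv) is taken with constant $c_{E_0}$. Otherwise one replaces $c_{E_0}$ by the restricted column constant, which is simply the original $c$, and $\bar\delta_{E_0}$ by $cs_{E_0}/\mu(V)$.
\item Keeping the same $\delta$ forces $\int_{E\setminus E_0}M(v,e)\,wt(e)\,d\tau(e)=0$ a.e. I would note this as the implicit content of the hypothesis. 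It holds, for instance, if $wt=0$ on $E\setminus E_0$.
\end{itemize}
With these readings fixed, the argument is a direct application of Theorem~\ref{t3}.
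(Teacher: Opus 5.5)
Your proposal is correct and is essentially the paper's own argument: apply Theorem~\ref{t3} on $V\times E_0$, using Fubini to get $\frac{1}{s_{E_0}}\int_{V\times E_0}\phi(\delta)\,M\,wt\,d(\mu\times\tau)=\frac{1}{s_{E_0}}\int_V f(\delta)\,d\mu$ and then Jensen on $(V,\mu)$. The caveats you raise are genuine and are passed over in the paper, which simply asserts $\bar\delta_{E_0}=c_{E_0}s_{E_0}/\mu(V)$: one needs convexity of $f(t)=t\phi(t)$ rather than of $\phi$, keeping the same $\delta$ forces $\int_{E\setminus E_0}M\,wt\,d\tau=0$ a.e., and the restricted column integral is automatically the original $c$, so the literal choice $c_{E_0}=\tau(E_0)$ is justified only when $\tau(E_0)=c$ (otherwise the stated bound can fail), which makes your correction to $c$ and $cs_{E_0}/\mu(V)$ the right one.
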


\begin{proof}
Apply Theorem~\ref{t3} to the restricted product space \(V\times E_0\).
The proof of Theorem~\ref{t3} uses only Fubini/Tonelli and the hypotheses ensuring
\[
\int_{V\times E_0}\phi(\delta(v))\,M(v,e)\,wt(e)\,d(\mu\times\tau)
=
\int_V \delta(v)\phi(\delta(v))\,d\mu(v)
\quad\text{up to the normalization by }s_{E_0}.
\]
Hence Jensen's inequality on \((V,\mu)\) yields
\[
\frac{1}{s_{E_0}}\int_{V\times E_0}\phi(\delta(v))\,M(v,e)\,wt(e)\,d(\mu\times\tau)
=
\frac{1}{s_{E_0}}\int_V f(\delta(v))\,d\mu(v)
\ge
\frac{\mu(V)}{s_{E_0}} f(\bar\delta_{E_0})
=
c_{E_0}\phi(\bar\delta_{E_0}),
\]
using \(\bar\delta_{E_0}=c_{E_0}s_{E_0}/\mu(V)\). The strict convexity case follows from the equality condition in Jensen.
\end{proof}

\section{A family of Special Cases}

Let $V = E = \mathbb {N} = \{0, 1, 2, \dots \}$ and let $\mu$ and $\tau$ be weighted counting measures: there are positive sequences $(a_0, a_1, \dots), (b_0, b_1, \dots)$ such that, for all $S \subseteq \mathbb {N}$,
$$
\mu (S) = \sum _{i \in S} a_i \mbox { and } \tau (S) = \sum _{j \in S} b_j.
$$
Let $wt: E = \mathbb {N} \to \mathbb {R}$ be the sequence $(w_0, w_1, \dots)$.  We could allow $I$ to be almost any real interval, but we will forsake generality for now and take $I = (0, \infty)$; further, we will require $w_j > 0$ for each $j \in \mathbb {N}$.  Let us leave $M = [m_{ij}]_{i, j \geq 0}$ unregulated for now, although we expect that in most cases, $m_{ij} \geq 0$ for all $i, j \in \mathbb {N}$.  We leave $\varphi$ unspecified.

Then the requirements in the hypothesis of Theorem \ref {t3} are, in this special case, as follows.

\begin{itemize}
\item [(i)] $\mu (V) = \sum ^{\infty}_{i=0} a_i < \infty$.
\item [(ii)] $s = \sum _{j=0}^{\infty} w_j b_j < \infty$.     
\item [(iii)] $\sum _{i=0}^{\infty} \sum _{j=0}^{\infty} |m_{ij}| w_j a_i b_j < \infty$.
If $m_{ij} \geq 0$ for all $i, j$, then this will be implied by 
(ii), and the next requirement.
\item [(iv)] For some constant $c$, for all $j \in \mathbb {N}$,
$$
\sum _{i=0}^{\infty} a_i m_{ij} = c.
$$

\item [(v)] For every $i \in \mathbb {N}$, $0 < \delta (i) = \sum _{j=0}^{\infty} m_{ij} w_j b_j < \infty$.
\item [(vi)] $\sum _{i=0}^{\infty} \sum _{j=0}^{\infty} a_i| \varphi (\delta (i)) \mid |m_{ij}| w_j b_j < \infty$.
\end{itemize}

Then the conclusion $(*)$ is

$$\begin{array}{c}
(\sum _{j=0}^{\infty} w_j b_j)^{-1} \sum _{i=0}^{\infty} \sum _{j=0}^{\infty} a_i \varphi (\sum _{k = 0}^{\infty} m_{ik} w_k b_k) m_{ij} w_j b_j\\ [.2in]
\geq c \varphi ((\sum _{i=0}^{\infty} a_i)^{-1} \sum _{t=0}^{\infty} \sum _{j=0}^{\infty} a_t m_{tj} w_j b_j)\\  [.2in]
= c \varphi (\frac {cs}{\sum _{i=0}^{\infty} a_i})
\end{array}$$

We can simplify this a bit by setting $w_jb_j = u_j$, for all $j \in N$.  [In the general setting of Theorem \ref {t3}, if $\tau$ is absolutely continuous with respect to some other positive measure $\lambda$---say, $d \tau (y) = b(y) d \lambda (y)$ for some $\lambda$-measurable non-negative real-valued function $b$---then we can replace $\tau$ by $\lambda$ and $wt \cdot b$ by $u$.]  With this change of notation , $s = \sum _{j = 0}^{\infty} u_j$.

Let's test this result by taking $M$ to be a diagonal matrix, with $m_{ii} = 1/a_i$, for all $i \in \mathbb {N}$.  Then $c = 1$.  We leave it to the reader to verify that the inequality derived by Theorem \ref {t3} is 
$$
s^{-1} \sum _{i=0}^{\infty} \varphi \left(\frac {u_i}{a_i}\right) u_i \geq \varphi \left(\frac {s}{\sum _{i=0}^{\infty} a_i}\right),
$$
which is straightforward to verify from the assumption that $f(t) = t \varphi (t)$ is convex on $(0, \infty)$---but would we have ever noticed these inequalities (indexed by the choices of $\varphi, (u_i)$, and $(a_i)$) if we had not followed the trail marked by Theorems \ref {t1} and \ref {t2} to Theorem \ref {t3}?

If we take $\varphi = \ell n$, and $(a_i), (u_i)$ to be positive, summable sequences satisfying $\sum _{i= 0}^{\infty} u_i \mid  \ell n \frac {u_i}{a_i}| < \infty$, which is requirement (vi) in this case, Theorem \ref {t3} implies 
$$
\prod _{i=0}^{\infty} (\frac {u_i}{a_i})^{u_i} \geq \left [ \frac {\sum _{i=0}^{\infty} u_i}{\sum _{i=0}^{\infty} a_i} \right ] ^{\sum _{j=0}^{\infty} u_j},
$$
with equality if and only if $\frac {u_{0}}{a_0} = \frac {u_1}{a_1} = \cdots$.  This seems to us to be an interesting and inobvious inequality!  We think that deeper 
results might be discovered by taking $M$ to be an upper triangular matrix---the hard part is in making choices that result in memorable inequalities.  We leave these explorations, as well as the family of situations in which $V$ and $E$ are real intervals and $\mu$ and $\tau$ are Lebesgue measure, for the future.\\

\vspace{1cm}

\begin{prop}\label{cor:discrete}
Let $V=\{1,\dots,p\}, E=\{1,\dots,q\}$ with counting measures $\mu,\tau$.  
Let $M=[m_{ij}]$ be a $p\times q$ matrix with constant column sum $c$, let $wt=(x_1,\dots,x_q)$ satisfy $s=\sum_j x_j>0$, and suppose
$d_i=\sum_{j=1}^q m_{ij}x_j\in I$ for all $i$.  Then
\[
s^{-1}\sum_{j=1}^q\sum_{i=1}^p \varphi(d_i) m_{ij} x_j \ge c\varphi(\bar d),
\]
with $\bar d=\frac1p\sum_{i=1}^p d_i$.  If $f(t)=t\varphi(t)$ is strictly convex on $I$ then equality iff $d_1=\cdots=d_p$.
\end{prop}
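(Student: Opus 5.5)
This is the finite, counting-measure case of Theorem~\ref{t3}, so I would prove it by specializing that theorem and checking that each hypothesis (i)--(vi) holds automatically. Take $V=\{1,\dots,p\}$ and $E=\{1,\dots,q\}$ with counting measures $\mu,\tau$. Set $M(i,j)=m_{ij}$ and $wt(j)=x_j$, and keep the interval $I$ and the function $\varphi$ from the statement, with $f(t)=t\varphi(t)$ assumed convex on $I$ (implicit in the statement, as in Theorem~\ref{t2}).

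The hypotheses are checked as follows.
\begin{itemize}
\item Every function on a finite set is measurable, and every integral is a finite sum, so (iii) and (vi) hold trivially.
\item For (i), $\mu(V)=p$, which lies in $(0,\infty)$.
\item For (ii), $s=\sum_j x_j$, which is positive and finite by assumption.
\item For (iv), $\int_V M(v,j)\,d\mu=\sum_i m_{ij}=c$ for every $j$, by the constant column sum.
\item For (v), $\delta(i)=\sum_j m_{ij}x_j=d_i\in I$ for every $i$.
\end{itemize}

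Next I translate the conclusion. We have $\bar\delta=\frac1p\sum_i d_i=\bar d$, and the left side of $(*)$ equals $s^{-1}\sum_{j}\sum_i\varphi(d_i)m_{ij}x_j$. So $(*)$ is exactly the displayed inequality. For equality, Theorem~\ref{t3} says that, when $f$ is strictly convex, equality holds iff $\delta=\bar\delta$ $\mu$-a.e. Since each point has counting measure $1$, this means $d_i=\bar d$ for all $i$, which is equivalent to $d_1=\cdots=d_p$.

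I see no real obstacle; the only care needed is the equality clause. One might worry whether the Jensen step in Theorem~\ref{t3} is honest when $\mu(V)$ is not normalized. To be safe, I would note that in the finite case the argument is elementary. After swapping the finite sums, the left side equals $\frac1s\sum_i f(d_i)$. The finite Jensen inequality gives $\frac1p\sum_i f(d_i)\ge f(\bar d)$, with equality under strict convexity iff all $d_i$ are equal. Finally, $\bar d=cs/p$, so $\frac ps f(\bar d)=c\varphi(\bar d)$, which recovers Theorem~\ref{t2} directly.
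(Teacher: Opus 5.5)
Your proposal is correct and follows the paper's proof, which applies Theorem~\ref{t3} with counting measures and notes that its hypotheses reduce to those of Theorem~\ref{t2}. You add three details the paper leaves unstated: the explicit check of (i)--(vi), the remark that convexity of $f$ is an implicit hypothesis, and the observation that ``$\mu$-a.e.'' means ``everywhere'' for counting measure; your finite-Jensen fallback is simply the proof of Theorem~\ref{t3} written out in the discrete setting.
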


\begin{proof}
Take $\mu,\tau$ counting measures and apply Theorem \ref{t3}.  All integrals become finite sums and hypotheses of Theorem \ref{t3} translate exactly to the discrete hypotheses of Theorem \ref{t2}.
\end{proof}

\begin{cor}\label{cor:geometric}
Assume the hypotheses of Theorem \ref{t3} and that $\delta(v)>0$ a.e. Let $\varphi(t)=\ln t$ (so $f(t)=t\ln t$ is strictly convex on $(0,\infty)$). Then
\[
\exp\!\left(\frac{1}{s}\int_{V\times E} M(v,e)wt(e)\ln(\delta(v))\,d(\mu\times\tau)\right)
\ge \bar\delta,
\]
where $k$ is any normalizing constant equal to the (a.e.) column integral $c$ if you prefer the form
\[
\Big[\prod_{j\in E}\Big(\prod_{v\in V}\delta(v)^{M(v,j)}\Big)^{1/c}\Big]^{1/s}\ge \bar\delta.
\]
Equality (in the strict convex case) occurs iff $\delta(v)=\bar\delta$ a.e.
\end{cor}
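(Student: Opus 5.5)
The plan is to specialize Theorem~\ref{t3} to $\varphi=\ln$ on $I=(0,\infty)$ and then exponentiate. The function $f(t)=t\ln t$ satisfies $f''(t)=1/t>0$ on $(0,\infty)$, so it is strictly convex there, as the introduction already observed. The assumption $\delta(v)>0$ a.e.\ is exactly hypothesis (v) with $I=(0,\infty)$. Hypotheses (i)--(iv) and (vi) are assumed; in particular (vi) is the integrability of $|\ln\delta(v)|\,|M(v,e)|\,|wt(e)|$ on $V\times E$. Theorem~\ref{t3} then yields
\[
\frac1s\int_{V\times E} M(v,e)\,wt(e)\ln(\delta(v))\,d(\mu\times\tau)\;\ge\; c\ln\bar\delta ,
\]
with equality if and only if $\delta=\bar\delta$ $\mu$-a.e.

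Next I will establish $c>0$, which is needed to normalize. From the first computation in the proof of Theorem~\ref{t3} we have $\bar\delta = cs/\mu(V)$. Since $\delta>0$ a.e.\ and $\mu(V)>0$, we get $\bar\delta>0$, so $\ln\bar\delta$ is defined. Because $s>0$ and $\mu(V)>0$, this also forces $c>0$. Dividing by $c$ and applying the strictly increasing map $\exp$ gives
\[
\exp\!\Big(\frac{1}{cs}\int_{V\times E} M\,wt\,\ln\delta\,d(\mu\times\tau)\Big)\ge\bar\delta .
\]
Strict monotonicity of $\exp$ carries the equality characterization over unchanged.

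The main obstacle is reconciling this with the normalization in the displayed statement. The honest consequence of $(*)$ has the factor $1/(cs)$ in the exponent; equivalently, the left side as printed is $\ge \bar\delta^{\,c}$. This coincides with the printed form when $c=1$, which is the role of the phrase ``$k$ normalizing constant equal to $c$''. I would therefore state and prove the $1/(cs)$ version, and remark that the first display is literally correct with right side $\bar\delta^{\,c}$.

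For the product form, I would restrict to the discrete case of Proposition~\ref{cor:discrete}, with counting measures and weights $x_j$. There the integral becomes $\sum_j\sum_v M(v,j)\,x_j\ln\delta(v)$, and exponentiating gives
\[
\Big[\prod_j\Big(\prod_v \delta(v)^{M(v,j)}\Big)^{x_j/c}\Big]^{1/s}\ge\bar\delta .
\]
When all $x_j=1$, so that $s=q$, this is exactly the bracketed expression in the statement and recovers the geometric-mean inequality from the introduction.
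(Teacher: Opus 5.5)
Your proposal is correct and follows the same route as the paper's sketch: apply Theorem~\ref{t3} with $\varphi=\ln$ on $I=(0,\infty)$, divide by $c$, and exponentiate, with the equality case carried over by strict monotonicity of $\exp$. Your check that $c=\mu(V)\bar\delta/s>0$ is a step the paper leaves implicit. Your normalization caveat is also justified: as printed, the first display only gives $\bar\delta^{\,c}$ on the right. For example, one-point spaces with $M\equiv 2$ and $wt\equiv 1/4$ give $\delta=\bar\delta=1/2$, and the printed display then claims $1/4\ge 1/2$. Likewise the product form needs the exponents $x_j$ unless $wt\equiv 1$ with counting measures, so the $1/(cs)$ version you prove is exactly what the argument supports.
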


\begin{proof}[Sketch of the proof]
Apply Theorem \ref{t3} with $\varphi=\ln$, rearrange and exponentiate the inequality. The remarks about equality follow from strict convexity of $f$.
\end{proof}

\begin{cor}
\label{cor:powermean}
Assume the hypotheses of Theorem~\ref{t3} and suppose $\delta(v)\ge 0$ for $\mu$-a.e.\ $v\in V$.
Let $p\ge 1$ and set $\phi(t)=t^{p-1}$, so that $f(t)=t\phi(t)=t^p$ is convex on $[0,\infty)$.
Then
\begin{equation}
\label{eq:powermean-product}
\frac{1}{s}\int_{V\times E}\delta(v)^{p-1}\,M(v,e)\,wt(e)\,d(\mu\times\tau)
=
\frac{1}{s}\int_V \delta(v)^p\,d\mu(v)
\;\ge\;
c\,\bar\delta^{\,p-1},
\end{equation}
where $s=\int_E wt\,d\tau$ and $\bar\delta=\frac{1}{\mu(V)}\int_V \delta\,d\mu$.

In particular, using $\bar\delta=\frac{cs}{\mu(V)}$, we obtain the usual power--mean inequality
\begin{equation}
\label{eq:powermean-marginal}
\left(\frac{1}{\mu(V)}\int_V \delta(v)^p\,d\mu(v)\right)^{1/p}
\ge
\frac{1}{\mu(V)}\int_V \delta(v)\,d\mu(v).
\end{equation}
Moreover, if $p>1$, then equality holds in \eqref{eq:powermean-marginal} if and only if
$\delta(v)=\bar\delta$ for $\mu$-a.e.\ $v\in V$.
\end{cor}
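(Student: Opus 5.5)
The plan is to specialize Theorem~\ref{t3} and the identity \eqref{eq:key-identity} to $\phi(t)=t^{p-1}$, with $I=[0,\infty)$; this is admissible because $\delta\ge 0$ a.e. First, the equality in \eqref{eq:powermean-product} is exactly \eqref{eq:key-identity} with $f(t)=t^p$. It rests on Fubini's theorem: integrate first in $e$ and use $\int_E M(v,e)\,wt(e)\,d\tau(e)=\delta(v)$. The interchange is justified by hypothesis (vi) of Theorem~\ref{t3}, which for this $\phi$ reads $\int|\delta(v)|^{p-1}|M||wt|\,d(\mu\times\tau)<\infty$. For $p\ge1$, $f(t)=t^p$ is convex on $[0,\infty)$, so Theorem~\ref{t3} gives the inequality $\ge c\,\bar\delta^{\,p-1}$ directly. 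When $p=1$, read $\phi\equiv 1$ and $0^0=1$; then both sides equal $c$ by hypothesis (iv) together with $\bar\delta=cs/\mu(V)$.

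To get \eqref{eq:powermean-marginal}, multiply \eqref{eq:powermean-product} by $s/\mu(V)>0$. This gives
\[
\frac{1}{\mu(V)}\int_V\delta^p\,d\mu\ \ge\ \frac{cs}{\mu(V)}\,\bar\delta^{\,p-1}=\bar\delta^{\,p},
\]
using $\bar\delta=cs/\mu(V)$ as computed in the proof of Theorem~\ref{t3}. Since $\delta\ge0$ a.e., we have $\bar\delta\ge0$, so taking $p$-th roots (monotone on $[0,\infty)$) yields \eqref{eq:powermean-marginal}. Alternatively one can apply Jensen to $t^p$ on the probability space $(V,\mu/\mu(V))$ directly; I will note that the two routes agree.

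For the equality case with $p>1$: $t^p$ is strictly convex on $[0,\infty)$, so the equality clause of Theorem~\ref{t3} (equivalently, the equality case of Jensen) shows that equality in \eqref{eq:powermean-product} holds iff $\delta=\bar\delta$ $\mu$-a.e. Equality in \eqref{eq:powermean-marginal} is equivalent to equality in \eqref{eq:powermean-product}, because the only steps in between are multiplication by a positive constant and a strictly monotone root. The converse direction is trivial by substitution.

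The main point needing care is integrability. I need $\int_V\delta^p\,d\mu$ to be finite (or to interpret the inequality in $[0,\infty]$), and the Fubini step needs the absolute integrability supplied by (vi). I will state that (vi) for $\phi(t)=t^{p-1}$ guarantees this. If $\int_V\delta^p\,d\mu=\infty$, the inequality holds trivially and equality cannot occur, since $\bar\delta<\infty$.
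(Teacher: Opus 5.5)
Your proposal is correct and follows essentially the same route as the paper. You specialize Theorem~\ref{t3} to $\phi(t)=t^{p-1}$ on $I=[0,\infty)$, use Fubini (justified by (vi)) to rewrite the double integral as $\int_V\delta^p\,d\mu$, rescale by $s/\mu(V)$ using $\bar\delta=cs/\mu(V)$, take $p$-th roots, and obtain the equality case from strict convexity of $t^p$ for $p>1$. Your side remarks fill in details the paper's sketch leaves implicit: the $p=1$ convention, and the finiteness of $\int_V\delta^p\,d\mu$, which does follow from (vi) because $0\le\delta(v)\le\int_E|M(v,e)|\,|wt(e)|\,d\tau(e)$.
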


\begin{proof}[Sketch of proof]
Apply Theorem~2.1 with $\phi(t)=t^{p-1}$ (so $f(t)=t^p$ is convex for $p\ge1$). By Tonelli/Fubini and
the definition of $\delta(v)=\int_E M(v,e)wt(e)\,d\tau(e)$,
\[
\int_{V\times E}\delta(v)^{p-1}M(v,e)wt(e)\,d(\mu\times\tau)
=
\int_V \delta(v)^{p-1}\left(\int_E M(v,e)wt(e)\,d\tau(e)\right)d\mu(v)
=
\int_V \delta(v)^p\,d\mu(v).
\]
Thus Theorem~\ref{t3} yields \eqref{eq:powermean-product}.
Finally, using the identity $\bar\delta=\frac{cs}{\mu(V)}$ (from the proof of Theorem~2.1) converts
\eqref{eq:powermean-product} into \eqref{eq:powermean-marginal}. The equality statement follows from
the strict convexity of $t^p$ for $p>1$ and the equality case in Jensen's inequality.
\end{proof}

\begin{cor}\label{cor:Lebesgue}
Let $V=[a,b],E=[\alpha,\beta]$ with Lebesgue measures and suppose $M(v,e)$ is nonnegative, measurable, and satisfies $\int_a^b M(v,e)\,dv=c$ for a.e.\ $e$. Let $wt(e)\ge0$ be integrable with $0<s=\int_\alpha^\beta wt<\infty$, and suppose $\delta(v)=\int_\alpha^\beta M(v,e)wt(e)\,de$ satisfies $\delta(v)\in I$. Then for convex $f(t)=t\varphi(t)$ we have
\[
\frac{1}{s}\int_a^b\varphi(\delta(v))\delta(v)\,dv \ge c\,\varphi(\bar\delta),
\]
with $\bar\delta=\frac1{b-a}\int_a^b\delta(v)\,dv$, and equality characterization under strict convexity as in Theorem \ref{t3}.
\end{cor}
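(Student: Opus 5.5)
This is the specialization of Theorem \ref{t3} to $V=[a,b]$ and $E=[\alpha,\beta]$, with $\mu$ and $\tau$ both Lebesgue measure. The plan is to check hypotheses (i)--(vi) of Theorem \ref{t3} in this setting and then rewrite the left side of $(*)$ with the key identity \eqref{eq:key-identity}.

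\textbf{Checking the hypotheses.}
Hypothesis (i) holds because $\mu(V)=b-a\in(0,\infty)$. Hypothesis (ii) is the assumption $0<s<\infty$.

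For (iii) we use that $M\ge0$ and $wt\ge0$, so Tonelli's theorem applies without any integrability assumption:
\[
\int_{V\times E}|M|\,|wt|\,d(\mu\times\tau)=\int_\alpha^\beta\Big(\int_a^b M(v,e)\,dv\Big)wt(e)\,de=c\,s<\infty .
\]
Hypothesis (iv) is exactly the assumed column condition $\int_a^b M(v,e)\,dv=c$ for a.e.\ $e$. Hypothesis (v) is the assumption $\delta(v)\in I$.

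\textbf{The main obstacle: hypothesis (vi).}
Hypothesis (vi) is not stated in the corollary. By Tonelli, and since $M,wt\ge0$ give $\delta\ge0$, we have
\[
\int_{V\times E}|\varphi(\delta(v))|\,M\,wt\,d(\mu\times\tau)=\int_a^b|\varphi(\delta(v))|\,\delta(v)\,dv=\int_a^b|f(\delta(v))|\,dv .
\]
So (vi) is equivalent to $f\circ\delta\in L^1[a,b]$. I would proceed as follows.

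First, $\delta\in L^1[a,b]$, with $\int_a^b\delta=cs$, by the computation done for (iii).

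Second, because $f$ is convex it lies above a supporting line at $\bar\delta$. This gives $f(\delta)\ge A+B\delta$, so the negative part of $f(\delta)$ is integrable. Hence $\int_a^b f(\delta)\,dv$ is well defined in $(-\infty,+\infty]$.

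Third, split into two cases.
\begin{itemize}
\item If $\int_a^b f(\delta)\,dv=+\infty$, the left-hand side is $+\infty$ and the inequality holds trivially. I would state this reading explicitly, since the left-hand side is written directly as $\frac1s\int_a^b\varphi(\delta)\delta\,dv$ and no Fubini step is needed to reach it.
\item If $\int_a^b f(\delta)\,dv<\infty$, then $f\circ\delta\in L^1$, so (vi) holds and Theorem \ref{t3} applies.
\end{itemize}

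\textbf{Concluding.}
In the second case, \eqref{eq:key-identity} converts the double integral in $(*)$ into $\frac1s\int_a^b f(\delta(v))\,dv$. By definition of $f$, this equals $\frac1s\int_a^b\varphi(\delta(v))\delta(v)\,dv$, and $(*)$ gives $\ge c\,\varphi(\bar\delta)$ with $\bar\delta=\frac1{b-a}\int_a^b\delta\,dv$. Alternatively, one can apply Jensen's inequality on $([a,b],dv/(b-a))$ directly and use $\bar\delta=cs/(b-a)$, exactly as in the proof of Theorem \ref{t3}.

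For the equality case, when $f$ is strictly convex, equality forces the integral to be finite, so we are in the second case. The equality statement of Theorem \ref{t3} then gives $\delta=\bar\delta$ almost everywhere.
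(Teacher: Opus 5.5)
Your proposal is correct and takes the same route as the paper, whose proof is just the one-line specialization of Theorem~\ref{t3} to Lebesgue measure on $[a,b]$ and $[\alpha,\beta]$. Your handling of hypothesis (vi) fills a gap in that sketch. Hypothesis (vi) is not implied by the corollary's assumptions: take $M(v,e)=g(v)$ with $g\in L^1\setminus L^2$ and $f(t)=t^2$. Your supporting-line bound giving $\int_a^b f(\delta)\,dv\in(-\infty,+\infty]$, followed by the case split, is exactly what is needed. One small point: if $\bar\delta$ is an endpoint of $I$, a supporting line there may not exist, but then $\delta=\bar\delta$ a.e.\ and the claim is trivial.
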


\begin{proof}[Sketch of the proof]
This is an immediate specialization of Theorem \ref{t3} when $\mu,\tau$ are Lebesgue measure on the intervals; the hypotheses reduce to the stated integrability and constant-column-integral condition.
\end{proof}

\begin{cor}\label{cor:erasures}
Let $E_0 \subseteq E$ be a measurable subset representing erased coordinates, 
and define a modified weight function
\[
wt_{E_0}(e) = 
\begin{cases}
0, & e \in E_0,\\
wt(e), & e \in E \setminus E_0.
\end{cases}
\]
If conditions $(i)$--$(vi)$ of Theorem \ref{t3} hold with $wt_{E_0}$ in place of $wt$, then
\[
\frac{1}{s_{E_0}} \int_{V\times E} \varphi(\delta_{E_0}(v))\, M(v,e)\, wt_{E_0}(e)\, d(\mu\times\tau)
\;\;\ge\;\; c \, \varphi(\bar{\delta}_{E_0}),
\]
where $s_{E_0}=\int_E wt_{E_0}\,d\tau$, 
$\delta_{E_0}(v)=\int_E M(v,e) wt_{E_0}(e)\,d\tau(e)$,
and $\bar{\delta}_{E_0}=\frac{1}{\mu(V)}\int_V \delta_{E_0}(v)\,d\mu(v)$.
Moreover, if $f(t)=t\varphi(t)$ is strictly convex, then equality holds if and only if $\delta_{E_0}(v)$ is constant a.e.
\end{cor}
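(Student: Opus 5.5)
The plan is to apply Theorem~\ref{t3} verbatim with the weight function $wt_{E_0}$ in place of $wt$. The hypothesis of Corollary~\ref{cor:erasures} is exactly that conditions $(i)$--$(vi)$ hold for $wt_{E_0}$. Concretely:
\begin{itemize}
\item condition $(ii)$ becomes $0<s_{E_0}<\infty$;
\item condition $(v)$ says $\delta_{E_0}(v)\in I$ a.e.;
\item conditions $(iii)$ and $(vi)$ are the integrability requirements with $|wt_{E_0}|$, which license Fubini;
\item condition $(iv)$ is unchanged, since it does not involve the weight at all. The column integral $\int_V M(v,e)\,d\mu(v)=c$ for a.e.\ $e\in E$ is still the same constant $c$.
\end{itemize}
This is why the constant $c$ appears on the right-hand side rather than some $c_{E_0}$.

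First we verify, as in the proof of Theorem~\ref{t3}, that
\[
\bar\delta_{E_0}=\frac{1}{\mu(V)}\int_V\delta_{E_0}\,d\mu=\frac{c\,s_{E_0}}{\mu(V)},
\]
by interchanging the order of integration via $(iii)$ and using $(iv)$. Next, the left-hand side collapses via Fubini (justified by $(vi)$) to
\[
\frac{1}{s_{E_0}}\int_V \delta_{E_0}(v)\varphi(\delta_{E_0}(v))\,d\mu(v)=\frac{1}{s_{E_0}}\int_V f(\delta_{E_0})\,d\mu,
\]
which is identity \eqref{eq:key-identity} with the modified weight. Then Jensen's inequality on the finite positive measure space $(V,\mu)$ gives
\[
\int_V f(\delta_{E_0})\,d\mu\ge \mu(V)f(\bar\delta_{E_0}).
\]
Finally,
\[
\frac{\mu(V)}{s_{E_0}}\,\bar\delta_{E_0}\,\varphi(\bar\delta_{E_0})=c\,\varphi(\bar\delta_{E_0}).
\]

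For the equality statement, strict convexity of $f$ and the equality case of Jensen force $\delta_{E_0}=\bar\delta_{E_0}$ $\mu$-a.e. Conversely, if $\delta_{E_0}$ is a.e.\ equal to some constant, that constant must be its mean $\bar\delta_{E_0}$, and equality then holds trivially. This is just the equality clause of Theorem~\ref{t3}.

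There is no real obstacle; everything is an instance of Theorem~\ref{t3}. The only point needing care is bookkeeping. We must make sure $s_{E_0}$ is the integral of $wt_{E_0}$, i.e.\ of $wt$ over $E\setminus E_0$; with the definition given, the ``erased'' set is $E_0$ itself. We must also check that measurability of $wt_{E_0}$ follows from that of $wt$ and of $E_0$, since $wt_{E_0}=wt\cdot\mathbf 1_{E\setminus E_0}$.
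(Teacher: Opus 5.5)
Your proposal is correct and is the intended argument: the paper states this corollary without a separate proof, as a direct specialization of Theorem~\ref{t3} with $wt_{E_0}$ in place of $wt$. You note that condition (iv) does not involve the weight, so the same constant $c$ appears, and that a.e.-constancy of $\delta_{E_0}$ forces the constant to be $\bar\delta_{E_0}=c\,s_{E_0}/\mu(V)$; this is the right bookkeeping, and re-deriving the steps of Theorem~\ref{t3} rather than simply citing it is harmless.
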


\end{document}